\definecolor{uuuuuu}{rgb}{0.26666666666666666,0.26666666666666666,0.26666666666666666}
\definecolor{xdxdff}{rgb}{0.49019607843137253,0.49019607843137253,1.}
\definecolor{ffqqqq}{rgb}{1.,0.,0.}
\definecolor{ffqqqq}{rgb}{1.,0.,0.}
\definecolor{ffxfqq}{rgb}{1.,0.4980392156862745,0.}
\definecolor{uuuuuu}{rgb}{0.26666666666666666,0.26666666666666666,0.26666666666666666}
\definecolor{qqwuqq}{rgb}{0.,0.39215686274509803,0.}
\definecolor{zzttqq}{rgb}{0.6,0.2,0.}
\definecolor{xdxdff}{rgb}{0.49019607843137253,0.49019607843137253,1.}
\definecolor{qqqqff}{rgb}{0.,0.,1.}
\definecolor{cqcqcq}{rgb}{0.7529411764705882,0.7529411764705882,0.7529411764705882}
\definecolor{sqsqsq}{rgb}{0.12549019607843137,0.12549019607843137,0.12549019607843137}
\theoremstyle{plain}
\newtheorem{theorem}[subsection]{Theorem}
\newtheorem{lemma}[subsection]{Lemma}
\newtheorem{defi}[subsection]{Definition}
\newtheorem{prop}[subsection]{Proposition}
\theoremstyle{definition}
\newtheorem{defi1}[subsection]{Definition}
\newtheorem{example}[subsection]{Example}
\newtheorem{remark}[subsection]{Remark}
\newtheorem{method}[subsection]{Methodology}
\newcommand{\uu}{\cup}
\newcommand{\UU}{\bigcup}
\newcommand{\ci}{\subseteq}
\newcommand{\sci}{\subset}
\newcommand{\set}[1]{\{#1\}}
\newcommand{\ga}{\gamma}
\newcommand{\gb}{\beta}
\newcommand{\gk}{\kappa}
\newcommand{\gq}{\theta}
\newcommand{\tit}{\textit}
\newcommand{\C}[1]{\mathcal{#1}}
\newcommand{\D}[1]{\mathbb{#1}}
\newcommand{\te}{\text}
\newcommand{\pa}{\partial}
\begin{document}
	\title{Constrained Quantization for Uniform Distributions with Two Constraint Families}
	
	\author{$^1$Pavjeet Singh}
	\author{$^2$S. K. Katiyar}
	\author{$^3$Megha Pandey}
	\author{$^4$Mrinal Kanti Roychowdhury}

	\address{$^{1, 2}$Department of Mathematics\\
		Dr B R Ambedkar National Institute of Technology Jalandhar\\
		Jalandhar Punjab, India 144011.}

	\address{$^{3}$ School of Mathematics \\
		Northwest University Xi'an\\
		Shaanxi Province, 710069, PR China.}
	\address{$^{4}$School of Mathematical and Statistical Sciences\\
		University of Texas Rio Grande Valley\\
		1201 West University Drive\\
		Edinburg, TX 78539-2999, USA.}

	\email{$^1$pavjeetsingh23@gmail.com, $^2$sbhkatiyar@gmail.com}
	\email{$^3$meghapandey1071996@gmail.com, $^4$mrinal.roychowdhury@utrgv.edu}

	\subjclass[2010]{60E05, 94A34.}
	\keywords{Probability measure, constrained quantization error, constrained optimal sets of $n$-points, constrained quantization dimension, constrained quantization coefficient}
	
	\date{}
	\maketitle
	
	\pagestyle{myheadings}\markboth{Singh, Katiyar, Pandey, Roychowdhury}{Constrained quantization for a uniform distribution with respect to family of constraints}
	\begin{abstract}
		In this paper, we first consider a family of constraints given by straight lines. 
		For a uniform probability distribution, we determine the constrained optimal sets of 
		$n$-points and the corresponding $n$th constrained quantization errors for all positive 
		integers $n$. In addition, we calculate the constrained quantization dimension and the 
		constrained quantization coefficient with respect to this family of constraints. Next, 
		we turn to another family of constraints, consisting of concentric circles. For the same 
		probability distribution, we present a methodology to compute the constrained optimal sets 
		of $n$-points and the corresponding $n$th constrained quantization errors for all positive 
		integers $n$. Finally, we conclude the paper with a summary of the results and a discussion 
		of future research directions.
	\end{abstract}
	
	\section{Introduction}
	There are numerous practical uses for quantization theory, especially when a huge amount of data needs to be condensed into a limited number of distinct data points. Engineering technologies like data compression and signal processing are the source of quantization issues. As a result, quantization-related mathematical conclusions have a wide range of applications in several scientific fields, including data compression, signal processing, information theory, and communications (see \cite{GG, GL, GN, GL1, P, Z1, Z2}). Finding the best approximation of a probability distribution $\nu$ using a discrete probability distribution $Q$ with a given number $n$ of supporting points (also known as the optimal set of $n$-points) is known as quantization in the context of probability distributions. Recent work by Pandey and Roychowdhury \cite{PR2, PR1} developed the concept of constrained quantization. Constrained quantization is a specific instance of unconstrained quantization, which is commonly referred to as quantization in the literature. For quantization without constraints, see to \cite{DR,DFG,GL2, GL3, KNZ}. 
	\begin{defi1}\label{Vr}
		Let $\nu $ be a Borel probability measure on $\D R^2$. Let $\set{C_j\ci \D R^2: j\in \D N}$ be a family of closed sets with $C_1$ nonempty and $\ga\ci \UU_{j=1}^nC_j$ be a locally finite (i.e., intersection of $\ga$ with any bounded subset of $\D R^2$ is finite) subset of $\D R^2$. This implies that $\ga$ is countable and closed. Then, for $n\in \mathbb{N}$, the \tit {$n$th constrained quantization error} for $\nu$ with respect to the family of constraints $\set{C_j\ci \D R^2: j\in \D N}$, is defined by
		\begin{equation} \label{EqVr}
			R_{n}:=R_{n}(\nu)=\inf \Big\{\int \mathop{\min}\limits_{a\in\ga} d(x, a)^2 d\nu(x) : \ga \ci \UU_{j=1}^nC_j, ~ 1\leq  \text{card}(\ga) \leq n \Big\},
		\end{equation}
		where $\te{card}(A)$ represents the cardinality of the set $A$.
	\end{defi1}
	The number 
	\begin{equation*}
		R(\nu; \ga):= \int \mathop{\min}\limits_{a\in\ga} d(x, a)^2 d\nu(x)
	\end{equation*}
	is called the distortion error for $\nu$ with respect to a set $\ga \ci \D R^2$. 
	\begin{lemma}\emph{\cite{PR1}}
		If $\int d(x, 0)^2 d\nu(x)<\infty$ is satisfied, then the infimum in \eqref{EqVr} is achieved.
	\end{lemma}
	A set $ \ga \ci \mathop{\UU}\limits_{j=1}^n C_j$ for which the infimum in  \eqref{EqVr} is attained is called a \tit{constrained optimal set of $n$-points} for $\nu$.  
	
	\begin{remark}\label{cardinalityalphan}
		Let $\nu$ be a Borel probability measure defined on $\mathbb{R}^2$, such that the support of $\nu$ contains at least $n$ distinct points. In the case of unconstrained quantization, it is well-known that an optimal set of $n$-points (also referred to as an optimal set of $n$-means) consists of exactly $n$ elements (see \cite{GL, PR1}). However, this property does not generally hold in the setting of constrained quantization, i.e., if the support of $\nu$ contains at least $n$ elements that does not imply that there exists a constrained optimal set containing exactly $n$ elements (see \cite{PR1}).
	\end{remark}

	\begin{defi}
		Given a finite subset $\ga\sci \D R^2$, the Voronoi region generated by $a\in \ga$ is defined by
		\[M(a|\ga)=\set{x \in \D R^2 : \lVert x-a\rVert =\min_{b \in \ga}\lVert x-b\rVert}\]
		i.e., the Voronoi region generated by $a\in \ga$ is the set of all elements $x$ in $\D R^2$ such that $a$ is the nearest element to $x$ in $\ga$, and the set $\set{M(a|\ga) : a \in \ga}$ is called the Voronoi diagram or Voronoi tessellation of $\D R^2$ with respect to $\ga$. In unconstrained quantization, the elements in an optimal set of $n$-points are the conditional expectations in their own Voronoi regions. Hence, in unconstrained quantization an optimal set of $n$-points is referred to as an optimal set of $n$-means.
	\end{defi}
	
	\begin{defi}\cite{PR2,PR1}
		Let $R_{n}(\nu)$ be a strictly decreasing sequence, and write $R_{\infty}(\nu):=\mathop{\lim}\limits_{n\to \infty} R_{n}(\nu)$. 
		Then, the number $D(\nu)$ defined by 
		\[D(\nu):=\mathop{\lim}\limits_{n\to \infty} \frac{2\log n}{-\log (R_{n}(\nu)-R_{\infty}(\nu))},  \]
		if it exists, is called the \tit{constrained quantization dimension} of $\nu$. 
		For any $\gk>0$, the  number  
		\begin{equation} \label{eq00100} \lim_{n\to \infty} n^{\frac 2 \gk}  (R_{n}(\nu)-R_{\infty}(\nu)),\end{equation} if it exists, is called the \tit{$\gk$-dimensional constrained quantization coefficient} for $\nu$.
	\end{defi}
	
	\begin{prop}\emph{\cite{PR1}}
		If $\kappa$-dimensional constrained quantization coefficient is finite and positive, then the constrained quantization dimension of $\nu$ exists and $\gk$ equals $D(\nu)$.
	\end{prop}
	
	\begin{remark}
		It is important to note that, in the definition of constrained quantization dimension, the sequence of quantization errors $\{R_n(\nu)\}_{n \in \mathbb{N}}$ is strictly decreasing. This assumption holds naturally in the setting of unconstrained quantization. However, in constrained quantization problems (see \cite{PR1}), this strict monotonicity may fail. In particular, as discussed in Remark \ref{cardinalityalphan}, there may exists an index $N\in \mathbb{N}$ such that optimal set $\ga_n$ remains unchanged for all $n\geq N$, i.e., $\ga_n = \ga_N$ for all $n \geq N$. In this case, increasing the number of points beyond $N$ does not improve the approximation, and the quantization error becomes constant from that point onward, i.e.,
		\[R_n(\nu) = R_N(\nu) = R_\infty(\nu) \quad \text{for all } n \geq N.\]
		As a result, the difference $R_n(\nu) - R_\infty(\nu)$ vanishes for large $n$, and thus the quantization dimension, defined by
		\[D(\nu) := \lim_{n \to \infty} \frac{2\log n}{-\log (R_n(\nu) - R_\infty(\nu))},\]
		does not exist. 
	\end{remark}

	%
	%

	\subsection{Motivation and work done} 
	Research on \emph{unconstrained quantization} has been widely developed and successfully 
	applied to various probability distributions, such as the uniform and self-similar distributions (see \cite{P1, RR, R1, R2, R3, RS}). 
	These studies have demonstrated the efficiency of quantization techniques in many settings. 
	However, in practical applications, additional \emph{constraints} often arise---for example, 
	spatial limitations in wireless communication systems or requirements of symmetry in physical 
	processes. Such constraints complicate the quantization process, since traditional methods may 
	fail to handle them effectively without a substantial increase in distortion. 
	
	This makes the study of \emph{constrained quantization} both important and challenging. 
	For instance, in cancer radiation therapy, beams of radiation must be carefully directed toward 
	tumor cells while sparing healthy tissue. In this context, the patient’s body can be viewed as the 
	support of a probability measure, with the tumor cells are the possible locations of 
	optimal points under constraints (\cite{ bangert2013analytical, shusharina2018clinical}).  
	
	The idea of constrained quantization was first introduced by Pandey and Roychowdhury for 
	uniform distributions under a single constraint (see \cite{PR1}), and later extended to families of constraints in 
	the case of the standard Cantor distribution (see \cite{PR2}).  
	
	In the present paper, we advance this line of study by investigating constrained quantization 
	for a \emph{uniform distribution} with respect to two distinct families of constraints:
	\begin{enumerate}
		\item [] \textbf{Lines:} 
		\begin{equation} \label{Megha000} 
			C_j: = \{(x,y) : -\tfrac{1}{j} \leq x \leq 1, \; y = x + \tfrac{1}{j} \}, 
			\quad j \in \mathbb{N}.
		\end{equation} \label{Megha00} 
		\item [] \textbf{Circles:} 
		\begin{equation} \label{Megha001}
			C_j:= \{(x,y) : x^2 + y^2 = \tfrac{1}{j^2} \}, 
			\quad j \in \mathbb{N}.
		\end{equation}
	\end{enumerate}

	\subsection{Delineation} 
	The structure of the paper is outlined as follows. Let $\nu$ denote the uniform distribution with support the closed interval $[0,1]$. In the next section, we present the required preliminaries and notations that will serve as the foundation for the subsequent analysis. The main results are contained in Section~\ref{section3} and Section~\ref{section4}. 
	Section~\ref{section3} is divided into two subsections. In the first subsection, we determine the constrained optimal sets of $n$-points together with the corresponding $n$th constrained quantization errors for the uniform distribution $\nu$ with respect to the family of constraints specified in~\eqref{Megha000}. The second subsection focuses on the quantization dimension and the quantization coefficient for the same distribution with respect to the same family of constraints.  
	Section~\ref{section4} deals with the determination of the constrained optimal sets of $n$-points and the associated constrained quantization errors for the distribution $\nu$ under the family of constraints defined in~\eqref{Megha001}. 
	
	Finally, Section~\ref{conclusionfuturework} concludes the paper with a summary of the findings and an outline of possible avenues for future research.
	
	\section{Preliminaries}
	In this section, we provide certain fundamental concepts and notations that we have utilized in this paper. We write \[\rho((x_1, y_1), (x_2, y_2)):=(x_1 - x_2)^2 +(y_1-y_2)^2,\] for any two elements $(x_1, y_1)$ and $(x_2, y_2)$ in $\D R^2$. which provides the squared Euclidean distance between the two elements $(x_1, y_1)$ and $(x_2, y_2)$.
	\par
	Let $a$ and $b$ be two elements that belong to an optimal set of $n$-points for some positive integer $n$. Then, $a$ and $b$  are called \tit{adjacent elements} if they have a common boundary in their own Voronoi regions. Let $c$ be an element on the common boundary of the Voronoi regions of the adjacent elements $a$ and $b$.
	Since the common boundary of the Voronoi regions of any two adjacent elements is the perpendicular bisector of the line segment joining the elements, we have
	\[\rho(a, c)-\rho(b, c)=0. \]
	We call such an equation a \tit{canonical equation}. 
	Notice that any element $x\in \D R$ can be identified as an element $(x, 0)\in \D R^2$. Thus, 
	\[\rho: \D R \times \D R^2 \to [0, \infty) \te{ such that } \rho(x, (a, b))=(x-a)^2 +b^2,\]
	where $x\in \D R$ and $(a, b) \in \D R^2$, defines a nonnegative real-valued function on $\D R \times \D R^2$. 
	Let $\pi: \D R^2 \to \D R$ such that $\pi(a, b)=a$ for any $(a, b) \in \D R^2$ denote the projection mapping. 
	Recall that $\nu$ is a Borel probability measure on $\D R$ which is uniform on its support the closed interval $[0, 1]$. Hence, the probability density function $f$ for $\nu$ is given by 
	\[f(x)=\left\{\begin{array}{cc}
		1 & \te{ if } 0\leq x\leq 1,\\
		0 & \te{ otherwise}.
	\end{array}\right.
	\]
	Hence, we have $d\nu(x)=\nu(dx)=f(x) dx$ for any $x\in \D R$, where $d$ stands for differential. For a random variable $X$ with distribution $\nu$, let $E(X)$ represents the expected value, and $V:=V(X)$ represent the variance of $X$. 
	In this paper, in Section~\ref{section3}, we investigate the constrained quantization  for the probability measure $\nu$ with respect to the family of constraints given by 
	\begin{equation} \label{eq000} C_j=\set{(x, y) : -\frac 1 j\leq x\leq 1 \te{ and } y=x+\frac 1 j } \te{ for all } j\in\D N, 
	\end{equation}
	i.e., the constraints $C_j$ are the line segments joining the points $(-\frac 1 j, 0)$ and $(1, 1+\frac 1 j)$ which are parallel to the line $y=x$.
	The perpendicular on a constraint $C_j$ passing through a point $(x, x+\frac 1 j)\in C_j$ intersects the real line at the point $(2x+\frac 1 j, 0)$ where $-\frac 1 j\leq x\leq 1$; and it intersects $J=[0,1]$ if 
	$0\leq 2x+\frac 1 j\leq 1$, i.e., if 
	\begin{equation} \label{eq0000} -\frac 1 {2j}\leq x\leq\frac 1 2-\frac 1 {2j}.
	\end{equation} 
	Thus, for all $j\in \D N$, there exists a one-one correspondence between the elements $(x, x+\frac 1 j)$ on $C_j$ and the elements $2x+\frac 1 j\in \D R$ if $-\frac 1 j\leq x\leq 1$. Thus, for all $j\in \D N$, there exist bijective functions $U_j$ such that
	\begin{equation} \label{eq0001} 
		U_j(x, x+\frac 1j)=2x+\frac 1 j \te{ and } U_j^{-1}(x)=\Big(\frac 1 2 (x-\frac 1 j), \frac 1 2 (x-\frac 1 j)+\frac 1 j\Big), \end{equation}
	where $-\frac 1 j\leq x\leq 1$. Then, in Section~\ref{section4}, we investigate the constrained quantization  for the probability measure $\nu$ with respect to the family of constraints given by 
	\begin{equation} \label{eq000} C_j=\set{(x, y) : x^2+y^2=\frac 1{j^2} } \te{ for all } j\in\D N, 
	\end{equation}
	i.e., the constraints $C_j$ are the concentric circles with center $(0, 0)$ and radius $\frac 1 {j}$.  
	
	\section{Constrained quantization with respect to the family of constraints $C_j: = \set{(x,y) : -\tfrac{1}{j} \leq x \leq 1, \; y = x + \frac{1}{j}}~ \text{\normalfont for }  j\in \D N.$} \label{section3}
	The results in this section is given in the following two subsections. 
	
	\subsection{Constrained optimal sets of $n$-points and the $n$th constrained quantization errors}
	In this subsection, we calculate the constrained optimal sets of $n$-points and the $n$th constrained quantization errors for all $n\in\D N$ with respect to the family of constraints 
	$C_j = \set {(x,y) : -\frac{1}{j} \leq x \leq 1, \; y = x + \frac{1}{j} }$ for  $j\in \D N. $
	Let us first give the following lemma.
	\begin{lemma} \label{lemma0}
		Let $\ga_n\ci \mathop{\uu}\limits_{j=1}^n C_j$ be a constrained optimal set of $n$-points for $\nu$ such that
		\[\ga_n:=\set{(a_j, b_j) : 1\leq j\leq n},\]
		where $a_1<a_2<a_3<\cdots<a_n$. Then, $\ga_n\ci C_n$ and  $(a_j, b_j)=U_n^{-1}(E(X :  X\in \pi(M((a_j, b_j)|\ga_n))))$,
		where $M((a_j, b_j)|\ga_n)$ are the Voronoi regions of the elements $(a_j, b_j)$ with respect to the set $\ga_n$ for $1\leq j\leq n$.
	\end{lemma}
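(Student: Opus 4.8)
The plan is to establish the two assertions in turn---first that every element of an optimal set lies on the last constraint $S_n$, and then that each element obeys the stated centroid condition---both resting on one elementary computation that I would record at the outset. For a center $(x,x+\frac1j)\in S_j$ and a measurable $W\ci J$ with $P(W)>0$, write $\mu$ for the $x$-mean of $W$ (so that by \eqref{eq00} one has $E(\tbf X:\tbf X\in W)=(\mu,0)$) and $\gs^2$ for the corresponding variance. A variance decomposition gives
\[
\int_W \rho\big((t,0),(x,x+\tfrac1j)\big)\,dP = P(W)\Big[\gs^2+(\mu-x)^2+(x+\tfrac1j)^2\Big].
\]
The bracket is a strictly convex quadratic in $x$, minimized at $x^\ast=\tfrac12(\mu-\tfrac1j)$ with minimum value $\gs^2+\tfrac12(\mu+\tfrac1j)^2$; moreover the minimizing point $(x^\ast,x^\ast+\tfrac1j)$ is exactly $U_j^{-1}(\mu,0)$ by \eqref{eq0001}.

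For the first assertion I would argue by contradiction through an exchange. Suppose some $(a_k,b_k)\in\ga_n$ lies on $S_j$ with $j<n$, and let $W$ be the trace on $J$ of its Voronoi region $M((a_k,b_k)|\ga_n)$, so $P(W)>0$. Let $c'=U_n^{-1}(\mu,0)$ be the optimal center on $S_n$ for $W$. Replacing $(a_k,b_k)$ by $c'$ and bounding $\min$ by the old centers off $W$ and by $c'$ on $W$ yields, for $\ga_n'=(\ga_n\sm\set{(a_k,b_k)})\uu\set{c'}\ci\UU_{i=1}^nS_i$,
\[
V(P;\ga_n')\le V(P;\ga_n)-\int_W\rho(\cdot,(a_k,b_k))\,dP+\int_W\rho(\cdot,c')\,dP.
\]
By the opening computation the last two integrals satisfy $\int_W\rho(\cdot,c')\,dP=P(W)[\gs^2+\tfrac12(\mu+\tfrac1n)^2]<P(W)[\gs^2+\tfrac12(\mu+\tfrac1j)^2]\le\int_W\rho(\cdot,(a_k,b_k))\,dP$, the strict inequality holding because $\mu\ge 0$ and $\tfrac1n<\tfrac1j$. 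Hence $V(P;\ga_n')<V(P;\ga_n)$, contradicting optimality, and so $\ga_n\ci S_n$.

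For the second assertion, once all centers lie on $S_n$, optimality forces each $(a_j,b_j)$ to minimize the distortion contributed by its own Voronoi region over all positions on $S_n$; otherwise the same exchange with the partition held fixed would strictly lower $V(P;\ga_n)$. By the opening computation with $j=n$ this minimizer is unique and equals $U_n^{-1}\big(E(\tbf X:\tbf X\in M((a_j,b_j)|\ga_n))\big)$, which is the claimed identity.

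The main obstacle is not the exchange inequality itself but the two facts it silently uses: that every Voronoi region of an optimal set carries strictly positive $P$-measure (needed so the relocated mass forces a strict decrease), and that the trace of each region on $J$ is an interval so that its conditional mean is governed by \eqref{eq00}. The second is immediate, since Voronoi regions in the Euclidean plane are convex and hence meet the line $J$ in an interval; the first is the standard nondegeneracy property of optimal quantizers for a nonatomic $P$, which I would invoke (it also follows directly, since a region of zero measure could be discarded to improve the count). With these in hand the argument above is complete.
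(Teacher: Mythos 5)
Your proof is correct and follows essentially the same route as the paper's: both minimize the distortion contributed by a single center over its position on $S_t$ (your variance decomposition is just a cleaner packaging of the paper's explicit quadratic in $a_q$), observe that the resulting minimum $P(W)\big[\gs^2+\tfrac12(\mu+\tfrac1t)^2\big]$ is strictly decreasing in $t$ so that $t=n$ is forced, and read off the minimizer as $U_n^{-1}$ of the conditional mean. Your version is somewhat more careful than the paper's, since you make explicit the exchange inequality and the nondegeneracy of the Voronoi regions that the paper uses tacitly.
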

	
	\begin{proof}
		Let $\ga_n:=\set{(a_j, b_j): 1\leq j\leq n}$, as given in the statement of the lemma, be a constrained optimal set of $n$-points. Take any $(a_q, b_q)\in \ga_n$. Since $\ga_n\ci \mathop{\uu}\limits_{j=1}^n C_j$, we can assume that $(a_q, b_q) \in C_t$, i.e.,  $b_q=a_q+\frac 1 t$ for some $1\leq t\leq n$. Since the Voronoi region of $(a_q, b_q)$, i.e., $M((a_q, b_q)|\ga_n)$ has positive probability, we can assume that $M((a_q, b_q)|\ga_n)$ contains a subinterval $[a, b]$ from  the support $[0, 1]$ of $\nu$, where $0\leq a<b\leq 1$.  Hence, the distortion error contributed by $(a_q, b_q)$ in its Voronoi region $M((a_q, b_q)|\ga_n)$ is given by
		\begin{align*}
			&\int_{M((a_q, b_q)|\ga_n)}\rho(x, (a_q, b_q)) \,d\nu\\
			&=\frac{1}{3} (b-a) \Big(a^2-3 (a+b) a_q+a b+3 a_q^2+b^2+3 b_q^2\Big)\\
			&=\frac{1}{3} (b-a) \Big(a^2-3 (a+b) a_q+a b+3 \Big(a_q+\frac{1}{t}\Big)^2+3 a_q^2+b^2\Big).
		\end{align*}
		The above expression is minimum if $a_q=\frac{a t+b t-2}{4 t}$. Now, putting $a_q=\frac{a t+b t-2}{4 t}$, we have the above distortion error as 
		\[\frac{(b-a) \left(t^2 \left(5 a^2+2 a b+5 b^2\right)+12 t (a+b)+12\right)}{24 t^2}.\]
		Since $1\leq t\leq n$, the above distortion error is minimum if $t=n$. Thus, for $t=n$, we see that $(a_q, b_q) \in C_n$, and  
		\[a_q=\frac 12(\frac {a+b}2-\frac 1{n}),\te{ and } b_q=a_q+\frac 1 n=\frac 12(\frac {a+b}2-\frac 1{n})+\frac 1 n,\]
		which by \eqref{eq0001} yields that 
		\[(a_q, b_q)=U_n^{-1}\Big(\frac {a+b}2\Big)=U_n^{-1}(E(X :  X\in \pi(M((a_j, b_j)|\ga_n)))).\]
		Since $(a_q, b_q)\in \ga_n$ is chosen arbitrarily, the proof of the lemma is complete.  
	\end{proof}
	
	\begin{remark} \label{remM1} 
		By \eqref{eq0000} and \eqref{eq0001}, and Lemma~\ref{lemma0}, we can conclude that all the elements in a constrained optimal set of $n$-points must lie on $C_n$ between the two elements $U_n^{-1}(0,0)$ and $U_n^{-1}(1,0)$, i.e., between the two elements $(-\frac{1}{2 n},\frac{1}{2 n})$ and $(\frac{n-1}{2 n},\frac{n+1}{2 n})$. If this fact is not true, then the constrained quantization error can be strictly reduced by moving the elements in the constrained optimal set between the elements $(-\frac{1}{2 n},\frac{1}{2 n})$ and $(\frac{n-1}{2 n},\frac{n+1}{2 n})$ on $C_n$, in other words, the $x$-coordinates of all the elements in a constrained optimal set of $n$-points must lie between the two numbers $-\frac{1}{2 n}$ and $\frac{n-1}{2 n}$  (see Figure~\ref{Fig}).
	\end{remark} 
	
	\begin{lemma} \label{lemmaM1} 
		Let $\ga_n$ be a constrained optimal set of $n$-points for $\nu$. Then, $U_n(\ga_n)$ is an unconstrained optimal set of $n$-means for $\nu$. 
	\end{lemma}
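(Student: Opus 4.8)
The plan is to verify that $U_n(\ga_n)$ satisfies the two standard necessary conditions characterizing an optimal set of $n$-means for a nice distribution --- the centroid condition and the nearest-neighbor (Voronoi) condition --- and then to invoke the fact that for the uniform distribution these two conditions determine the optimal configuration uniquely. Throughout, write $\ga_n=\set{(a_j, b_j) : 1\le j\le n}$ with $a_1<\cdots<a_n$, which by Lemma~\ref{lemma0} lies on $S_n$, and set $p_j:=2a_j+\frac 1n$, so that $U_n(a_j, b_j)=(p_j, 0)$ and $U_n(\ga_n)=\set{(p_j, 0) : 1\le j\le n}$ sits on the $x$-axis. Since the support of $P$ lies on the $x$-axis, any optimal set of $n$-means for $P$ lies on the $x$-axis and reduces to the optimal $n$-means for the uniform marginal $P_1$ on $[0,1]$, so it suffices to compare $U_n(\ga_n)$ with that one-dimensional optimum.

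First I would record what Lemma~\ref{lemma0} already supplies. The lemma gives $U_n(a_j, b_j)=E(\tbf X : \tbf X\in M((a_j, b_j)|\ga_n))$; that is, each image point $(p_j, 0)$ is the conditional expectation of $\tbf X$ over the corresponding Voronoi region, which (since $\tbf X$ is supported on $J$) is exactly the centroid of that region viewed as a subinterval of $[0,1]$. This is the centroid condition for $U_n(\ga_n)$, \emph{provided} the region in question is also the Voronoi region of $(p_j, 0)$ with respect to $U_n(\ga_n)$.

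The crux is therefore to show that $U_n$ carries the constrained Voronoi partition of $J$ induced by $\ga_n\ci S_n$ onto the unconstrained Voronoi partition of $J$ induced by $U_n(\ga_n)$. Because $(a_j, b_j)$ and $(a_{j+1}, b_{j+1})$ both lie on the slope-one line $S_n$, their perpendicular bisector has slope $-1$ and passes through their midpoint $\big(\frac{a_j+a_{j+1}}{2},\frac{a_j+a_{j+1}}{2}+\frac 1n\big)$; using the perpendicular-foot formula $(x, x+\frac 1n)\mapsto(2x+\frac 1n, 0)$ recorded just before \eqref{eq0000}, this bisector meets $J$ at $(a_j+a_{j+1}+\frac 1n, 0)$. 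On the other hand, the unconstrained Voronoi boundary between $(p_j, 0)$ and $(p_{j+1}, 0)$ is their midpoint $\big(\frac{p_j+p_{j+1}}{2}, 0\big)=(a_j+a_{j+1}+\frac 1n, 0)$. These boundary points coincide for every $j$, and the extreme cells meet the support endpoints $0$ and $1$ in both pictures (consistent with Remark~\ref{remM1}), so the two partitions of $J$ into Voronoi intervals are identical. Combined with the preceding paragraph, each $(p_j, 0)$ is the centroid of its own unconstrained Voronoi region, so $U_n(\ga_n)$ satisfies both the nearest-neighbor and the centroid conditions.

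Finally I would invoke uniqueness. Writing the common Voronoi intervals as $[x_{j-1}, x_j]$ with $x_0=0$ and $x_n=1$, the centroid condition reads $p_j=\frac{x_{j-1}+x_j}{2}$ and the nearest-neighbor condition reads $x_j=\frac{p_j+p_{j+1}}{2}$; eliminating the $p_j$ yields the recurrence $2x_j=x_{j-1}+x_{j+1}$, whose only solution with the stated endpoints is $x_j=\frac jn$. Since an optimal set of $n$-means for $P_1$ exists and must satisfy these same two conditions, it equals this unique equal-partition configuration; as $U_n(\ga_n)$ also satisfies them, it coincides with the optimal set of $n$-means, which proves the lemma. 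I expect the main obstacle to be the boundary-matching step in the third paragraph: everything hinges on the non-obvious fact that the \emph{non-orthogonal} map $U_n$ nonetheless sends the perpendicular bisector of two points on $S_n$ to the perpendicular bisector of their images on the $x$-axis, so that the constrained and unconstrained Voronoi partitions of the support agree exactly.
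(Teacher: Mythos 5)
Your proof is correct, but it takes a genuinely different route from the paper's. The paper writes the distortion $V(P;\ga_n)$ explicitly as a sum of integrals over the Voronoi intervals with boundaries $(a_{j}+a_{j+1}+\frac 1n,0)$, differentiates with respect to each $a_i$, solves the resulting linear system ($\frac 1n+2a_1=a_2-a_1=\cdots=1-\frac 1n-2a_n$) to get the explicit coordinates $a_j=\frac{2j-3}{4n}$, and only then applies $U_n$ and matches the result against the known optimal set $\set{(\frac{2j-1}{2n},0)}$ from \cite{RR}. You instead isolate the structural reason the correspondence works: the perpendicular bisector of two points of $S_n$ meets $J$ at $(a_j+a_{j+1}+\frac 1n,0)$, which is exactly the midpoint of their images $p_j=2a_j+\frac 1n$ and $p_{j+1}$, so $U_n$ carries the constrained Voronoi partition of the support onto the unconstrained one; combined with the centroid identity already contained in Lemma~\ref{lemma0}, this shows $U_n(\ga_n)$ satisfies both standard necessary conditions for $n$-means, and the recurrence $2x_j=x_{j-1}+x_{j+1}$ pins down the unique configuration satisfying them. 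Your bisector computation checks out, and the uniqueness step is sound. What the paper's route buys is the explicit coordinates, which it reuses verbatim in Theorem~\ref{theo1}; what your route buys is a conceptual explanation (and a template that would transfer to other constraint families where the bisector-matching holds), at the modest cost of having to invoke the standard facts that an optimal set of $n$-means exists, has $n$ distinct points, and necessarily satisfies the centroid and nearest-neighbor conditions (all available in \cite{GL}).
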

	\begin{proof} 
		By Lemma~\ref{lemma0}, $\ga_n\ci C_n$ for all $n\in \D N$. Let $\ga_n:=\set{(a_j, b_j) : 1\leq j\leq n}$ be a constrained optimal set of $n$-points for $\nu$ such that $a_1<a_2<\cdots<a_n$. Then, by Remark~\ref{remM1}, we have $-\frac 1 {2n}\leq a_1<a_2<\cdots<a_n\leq \frac{n-1}{2 n}$.  
		Moreover, as $(a_j, b_j)\in C_n$, we have $b_j=a_j+\frac 1 n$ for all $1\leq j\leq n$. 
		\par 
		Notice that the boundary of the Voronoi regions of the adjacent elements $(a_j, b_j)$ and $(a_{j+1}, b_{j+1})$ intersects the support of $\nu$ at the elements $(a_j+a_{j+1}+\frac 1 n, 0)$ for $1\leq j\leq n-1$. Hence, the distortion error due to the set $\ga_n$ is given by
		\begin{align*}
			&R(\nu;\ga_n)=\int_{\D R} \min_{a\in \ga_n}\rho(x, a)d\nu(x)\\
			&=\int_0^{a_1+a_2+\frac 1 n}   \rho(x, (a_1, a_1+\frac 1n)) \, dx+\sum _{i=2}^{n-1} \int_{a_{i-1}+a_i+\frac 1 n}^{a_{i}+a_{i+1}+\frac 1 n}\rho(x, (a_i, a_i+\frac 1 n))\, dx\\
			&\qquad +\int_{a_{n-1}+a_n+\frac 1 n}^1\rho(x, (a_n, a_n+\frac 1 n)) \, dx.
		\end{align*}
		Since $R(\nu;\ga_n)$ gives the optimal error and is differentiable with respect to $a_i$ for all $1\leq i\leq n$, we have $\frac{\pa}{\pa a_i} R(\nu;\ga_n)=0$ implying
		\[\frac 1 n+2a_1=a_2-a_1=a_3-a_2=\cdots=a_n-a_{n-1}=1-\frac 1 n-2 a_n.\]
		Then, we can assume that there is a constant $d$ depending on $n$, such that 
		\begin{equation} \label{eqM1} \frac 1 n+2a_1=a_2-a_1=a_3-a_2=\cdots=a_n-a_{n-1}=1-\frac 1 n-2 a_n=d
		\end{equation} 
		yielding 
		\[a_2=d+a_1, \, a_3=2d+a_1, \, a_4=3d+a_1, \, \cdots, \, a_n=(n-1) d+a_1, \]
		i.e., 
		\begin{equation} \label{eqM2} a_j=(j-1)d +a_1 \te{ for } 2\leq j\leq n.
		\end{equation} 
		Again, by \eqref{eqM1}, we have 
		\begin{equation} \label{eqM3} a_1=\frac 12(d-\frac 1 n) \te{ and } a_n=\frac 1 2(1-\frac 1 n-d).
		\end{equation} 
		Putting the above values of $a_1$ and $a_n$ in the expression $a_n=(n-1) d+a_1$, and then upon simplification, we have $d=\frac 1 {2n}$. Putting the values of $d$ by \eqref{eqM2} and \eqref{eqM3}, we have 
		\begin{equation}
			a_j=\frac{2j-3}{4n} \te{ for } 1\leq j\leq n.
		\end{equation} 
		Then, notice that $a_j+a_{j+1}+\frac 1 n=\frac j n$ for $1\leq j\leq n-1$.
		Hence, by Lemma~\ref{lemma0}, we have 
		\[(a_j, b_j)= U_n^{-1}(E(X :  X\in [\frac{j-1}{n}, \frac j n]))=U_n^{-1}(\frac {2j-1}{2n}) \te{ for } 1\leq j\leq n.\]
		We know that for the uniform distribution $\nu$, the unconstrained optimal set of $n$-means (see \cite{RR}) is given by 
		\[\left\{\frac{2j-1}{2n} : 1\leq j\leq n\right\}.\]
		Since 
		\[U_n(\ga_n) =\set{U_n(a_j, b_j) : 1\leq j\leq n}=\left\{\frac{2j-1}{2n} : 1\leq j\leq n\right\},\]
		the proof of the lemma is complete. 
	\end{proof}

	\begin{figure}
		\vspace{-0.25 in}
		\centerline{\includegraphics[width=6 in, height=4in]{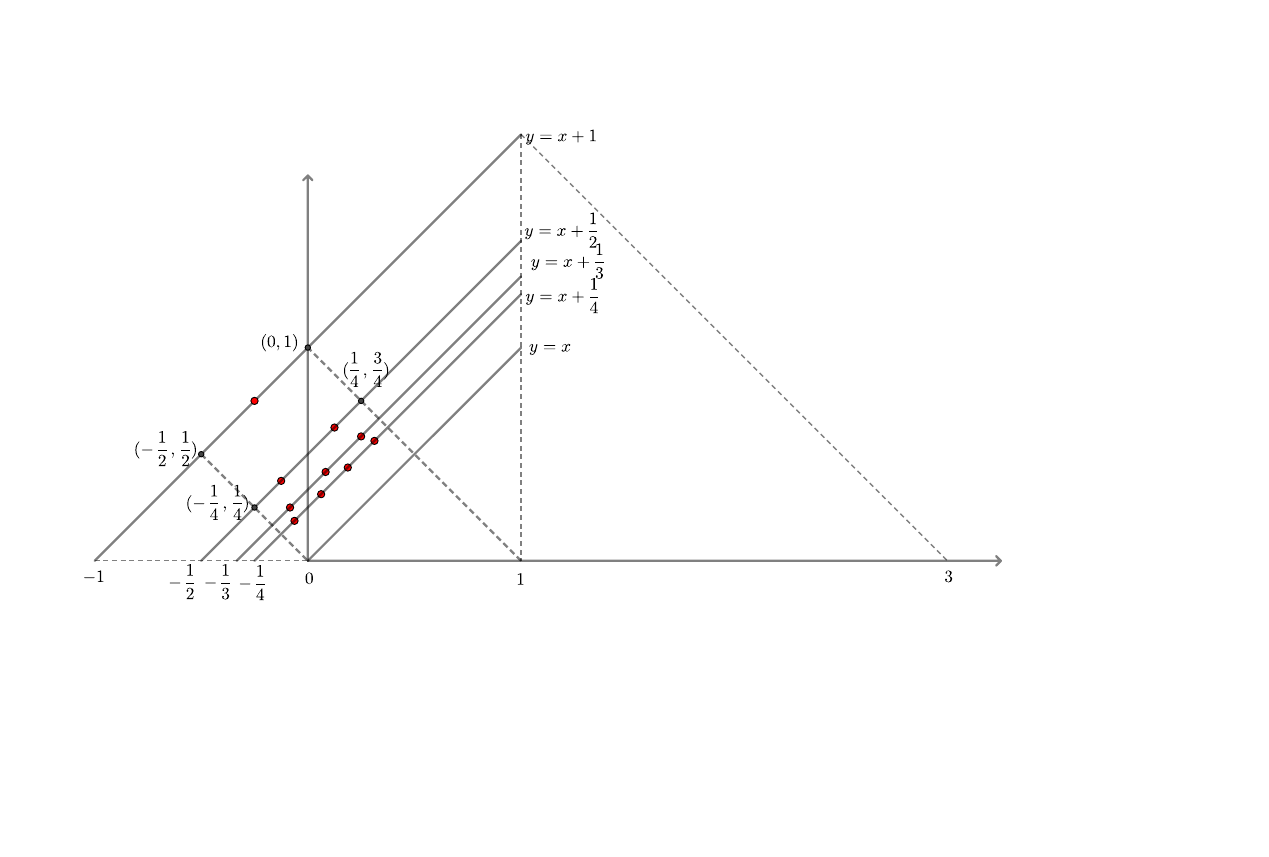}}
		\vspace{-1.2 in}
		\caption{Points in the optimal sets of $n$-points for $1\leq n\leq 4$.} \label{Fig}
	\end{figure}

	The following theorem gives the constrained optimal set of $n$-points for $\nu$. 
	\begin{theorem} \label{theo1} 
		A constrained optimal set of $n$-points for the probability distribution $\nu$ is given by 
		\[\left\{\Big(\frac{1}{2} \left(\frac{2 j-1}{2 n}-\frac{1}{n}\right),\frac{1}{2} \left(\frac{2 j-1}{2 n}-\frac{1}{n}\right)+\frac{1}{n}\Big) : 1\leq j\leq n\right\},\]
		with $n$th constrained quantization error 
		\[R_n=\frac{4 n^2+12 n+13}{24 n^2}.\]
	\end{theorem}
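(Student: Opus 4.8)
The plan is to read off the optimal set from Proposition~\ref{propM1} together with the known optimal set of $n$-means for the uniform distribution, and then to compute $V_n$ by summing the distortion contributed by each Voronoi region already identified in the proof of Lemma~\ref{lemmaM1}.

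First, for the optimal set: recall that the optimal set of $n$-means for $P$ is $\{(\frac{2j-1}{2n}, 0) : 1 \le j \le n\}$. By Proposition~\ref{propM1}, a set $\gamma_n \subseteq S_n$ is an optimal set of $n$-points if and only if $U_n(\gamma_n)$ equals this set of $n$-means. Thus I would define $\alpha_n := U_n^{-1}(\{(\frac{2j-1}{2n}, 0) : 1 \le j \le n\})$ and evaluate it using the explicit formula $U_n^{-1}(x, 0) = (\frac{1}{2}(x - \frac{1}{n}), \frac{1}{2}(x - \frac{1}{n}) + \frac{1}{n})$ from \eqref{eq0001} with $x = \frac{2j-1}{2n}$. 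This reproduces exactly the set displayed in the statement, establishing the first assertion.

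For the quantization error, I would exploit the partition of the support obtained in the proof of Lemma~\ref{lemmaM1}: the Voronoi region of the $j$th element meets $J$ precisely in the interval $[\frac{j-1}{n}, \frac{j}{n}] \times \{0\}$, so that $V_n = \sum_{j=1}^n \int_{(j-1)/n}^{j/n} \rho((x,0), (a_j, b_j))\, dx$. Rather than redo each integral, I would substitute $t = n$, $a = \frac{j-1}{n}$, $b = \frac{j}{n}$ into the minimized distortion formula $\frac{(b-a)(t^2(5a^2+2ab+5b^2)+12t(a+b)+12)}{24t^2}$ already derived in Lemma~\ref{lemma0}; this is legitimate because the $x$-coordinate $a_j = \frac{2j-3}{4n}$ of the optimal points coincides with the minimizing value $\frac{at+bt-2}{4t}$ used there. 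A brief simplification collapses the bracketed factor to $12j^2 + 12j + 5$, so the $j$th region contributes $\frac{12j^2+12j+5}{24n^3}$.

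Finally, summing over $j$ with $\sum_{j=1}^n j^2 = \frac{n(n+1)(2n+1)}{6}$ and $\sum_{j=1}^n j = \frac{n(n+1)}{2}$ yields numerator $4n^3 + 12n^2 + 13n$, whence $V_n = \frac{4n^2+12n+13}{24n^2}$. The only point requiring care is this concluding algebraic consolidation; conceptually the argument is a direct assembly of the preceding lemmas and proposition, so I anticipate no genuine obstacle.
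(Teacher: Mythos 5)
Your proposal is correct and follows essentially the same route as the paper: both obtain the optimal set by applying Proposition~\ref{propM1} together with the known optimal set of $n$-means $\{(\frac{2j-1}{2n},0)\}$ and the explicit formula for $U_n^{-1}$, and both compute $V_n$ by summing the distortion over the Voronoi intervals $[\frac{j-1}{n},\frac{j}{n}]$. Your only departure is cosmetic but welcome: where the paper says ``upon simplification yields,'' you make the algebra explicit by reusing the minimized per-interval distortion $\frac{12j^2+12j+5}{24n^3}$ from Lemma~\ref{lemma0} and summing, which checks out (the numerator $4n^3+12n^2+13n$ is correct).
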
 
	
	\begin{proof} Let $\ga_n:=\set{(a_j, b_j) : 1\leq j\leq n}$ be a constrained optimal set of $n$-points for $\nu$ such that $a_1<a_2<\cdots<a_n$.  By Lemma~\ref{lemmaM1}, we know that $U_n(\ga_n)$ is an optimal set of $n$-means for $\nu$, i.e., 
		\[U_n(\ga_n)=\left\{\frac {2j-1}{2n} : 1\leq j\leq n\right\}.\]
		Since $U_n$ is an injective function, we have 
		\[\ga_n=U_n^{-1}\Big\{\frac {2j-1}{2n} : 1\leq j\leq n\Big\}=\Big\{U_n^{-1}\Big(\frac {2j-1}{2n}\Big) : 1\leq j\leq n\Big\}\]
		i.e., 
		\[\ga_n=\left\{\Big(\frac{1}{2} \left(\frac{2 j-1}{2 n}-\frac{1}{n}\right),\frac{1}{2} \left(\frac{2 j-1}{2 n}-\frac{1}{n}\right)+\frac{1}{n}\Big) : 1\leq j\leq n\right\}.\]
		Writing 
		\[(a_j, b_j)=\Big(\frac{1}{2} \left(\frac{2 j-1}{2 n}-\frac{1}{n}\right),\frac{1}{2} \left(\frac{2 j-1}{2 n}-\frac{1}{n}\right)+\frac{1}{n}\Big),\]
		for $1\leq j\leq n$, we have the $n$th constrained quantization for $n$-points as 
		\begin{align*}
			&R_n=\int_{\D R} \min_{a\in \ga_n}\rho(x, a)d\nu(x)\\
			&=\int_0^{a_1+a_2+\frac 1 n}   \rho(x, (a_1, a_1+\frac 1n)) \, dx+\sum _{i=2}^{n-1} \int_{a_{i-1}+a_i+\frac 1 n}^{a_{i}+a_{i+1}+\frac 1 n}\rho(x, (a_i, a_i+\frac 1 n))\, dx\\
			&\qquad +\int_{a_{n-1}+a_n+\frac 1 n}^1\rho(x, (a_n, a_n+\frac 1 n)) \, dx,
		\end{align*}
		which upon simplification yields
		\[R_n=\frac{4 n^2+12 n+13}{24 n^2}.\]
		Thus, the proof of the theorem is complete (see Figure~\ref{Fig}). 
	\end{proof}

	\subsection{Constrained quantization dimension and constrained quantization coefficient} \label{sec4}
	In this subsection, we show that the constrained quantization dimension $D(\nu)$ exists and equals two. We further show that the $D(\nu)$-dimensional constrained quantization coefficient for $\nu$ exists as a finite positive number.

	\begin{theorem}\label{theo2} 
		The constrained quantization dimension $D(\nu)$ of the probability measure $\nu$ exists, and $D(\nu)=2$. 
	\end{theorem}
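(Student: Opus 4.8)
The plan is to compute the constrained quantization dimension directly from its definition using the explicit formula for $V_n$ already obtained in Theorem~\ref{theo1}. The first thing I would do is determine the limiting value $V_{\infty}:=V_{\infty,r}(P)=\lim_{n\to\infty}V_n$. From the closed form
\[
V_n=\frac{4n^2+12n+13}{24n^2}=\frac{1}{6}+\frac{1}{2n}+\frac{13}{24n^2},
\]
I read off immediately that $V_\infty=\tfrac16$. The crucial structural observation, which distinguishes the constrained case from the unconstrained one, is that $V_\infty>0$: because every admissible point is forced to lie on the line $S_n$ (by Lemma~\ref{lemma0} and Remark~\ref{remM1}), and these lines stay at a bounded distance from the support $J$, the quantization error cannot be driven to zero. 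This nonvanishing residual error is exactly why the definition of $D(P)$ subtracts $V_\infty$ inside the logarithm.

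Next I would form the difference that appears in the definition of the dimension:
\[
V_n-V_\infty=\frac{4n^2+12n+13}{24n^2}-\frac16=\frac{12n+13}{24n^2}.
\]
The dominant term as $n\to\infty$ is $\tfrac{12n}{24n^2}=\tfrac{1}{2n}$, so $V_n-V_\infty$ behaves asymptotically like $c/n$ with $c=\tfrac12$; more precisely $V_n-V_\infty=\tfrac{1}{2n}\bigl(1+o(1)\bigr)$. This is the key quantity, and the whole computation reduces to controlling its logarithmic order.

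Then I would substitute into the defining limit (with $r=2$, suppressing $r$ as the paper does):
\[
D(P)=\lim_{n\to\infty}\frac{2\log n}{-\log\bigl(V_n-V_\infty\bigr)}
=\lim_{n\to\infty}\frac{2\log n}{-\log\!\left(\frac{12n+13}{24n^2}\right)}.
\]
Since $-\log\!\bigl(\tfrac{12n+13}{24n^2}\bigr)=\log\!\bigl(\tfrac{24n^2}{12n+13}\bigr)=\log n+\log\!\bigl(\tfrac{24n}{12n+13}\bigr)$, and the second term tends to the finite constant $\log 2$, the denominator equals $\log n+O(1)$. Dividing numerator and denominator by $\log n$ gives $\tfrac{2}{1+O(1/\log n)}\to 2$, so the limit exists and equals $2$. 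This establishes $D(P)=2$.

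I do not anticipate a serious obstacle here, since once Theorem~\ref{theo1} supplies the exact value of $V_n$ the argument is a routine asymptotic estimate. The only point that deserves care, and the one I would state explicitly rather than gloss over, is the identification $V_\infty=\tfrac16$ together with the fact that it is strictly positive: the entire definition is built around subtracting this limit, and it is precisely the positivity of $V_\infty$ (a genuinely constrained phenomenon) that makes the difference $V_n-V_\infty$ decay polynomially in $n$ and hence yields a finite dimension rather than a degenerate one. Everything else is a matter of extracting the leading power of $n$ from the rational expression and confirming that the correction terms contribute only $O(1)$ to the logarithm in the denominator.
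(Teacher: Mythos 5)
Your proposal is correct and follows essentially the same route as the paper: both substitute the closed form of $V_n$ from Theorem~\ref{theo1}, identify $V_\infty=\tfrac16$, and evaluate the defining limit. Your version simply makes explicit the asymptotic bookkeeping ($V_n-V_\infty=\tfrac{12n+13}{24n^2}$ and the denominator being $\log n+O(1)$) that the paper leaves as a one-line computation.
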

	
	\begin{proof} By Theorem~\ref{theo1}, the $n$th constrained quantization error is given by 
		\[R_n=\frac{4 n^2+12 n+13}{24 n^2}.\]
		Notice that $R_\infty=\mathop{\lim}\limits_{n\to \infty} R_n=\frac 1 6$.
		Hence, the constrained quantization dimension is given by 
		\[D(\nu)=\lim_{n\to \infty} \frac{2\log n}{-\log(R_n-R_\infty)}=\lim_{n\to \infty}\frac{2 \log n}{-\log \left(\frac{4 n^2+12 n+13}{24 n^2}-\frac{1}{6}\right)}=2,\]
		which is the theorem. 
	\end{proof}

	\begin{theorem} \label{theo3} 
		The $D(\nu)$-dimensional constrained quantization coefficient for $\nu$ exists, and equals $\frac 12$. 
	\end{theorem}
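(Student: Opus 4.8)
The plan is to apply the definition of the $\gk$-dimensional constrained quantization coefficient from \eqref{eq00100} with the order $r=2$ and $\gk=D(P)=2$, so that the normalizing exponent is $\frac{r}{\gk}=1$. The quantity in question is therefore the limit $\lim_{n\to\infty} n\,(V_n-V_\infty)$. All the ingredients are already in hand: Theorem~\ref{theo1} supplies the closed form $V_n=\frac{4n^2+12n+13}{24n^2}$, and the computation inside the proof of Theorem~\ref{theo2} gives $V_\infty=\frac16$.

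First I would subtract to obtain the explicit difference. Writing $\frac16=\frac{4n^2}{24n^2}$, the leading $n^2$-terms cancel and one is left with
\[V_n-V_\infty=\frac{4n^2+12n+13}{24n^2}-\frac{4n^2}{24n^2}=\frac{12n+13}{24n^2}.\]
The crucial structural point is that this difference is of exact order $n^{-1}$; this is precisely what makes the normalization $n^{r/\gk}=n$ produce a finite nonzero limit, and it also explains a posteriori why $D(P)=2$ is the correct dimension (the cancellation of the $n^2$-terms is what forces the denominator $-\log(V_n-V_\infty)$ to behave like $\log n$ in Theorem~\ref{theo2}).

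Next I would multiply by $n$ and simplify:
\[n\,(V_n-V_\infty)=\frac{n(12n+13)}{24n^2}=\frac{12n+13}{24n}=\frac12+\frac{13}{24n}.\]
Letting $n\to\infty$, the residual term $\frac{13}{24n}$ vanishes, so the limit exists and equals $\frac12$, which is exactly the assertion of the theorem.

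There is no genuine obstacle here: once the value of $V_n$ from Theorem~\ref{theo1} and the value $V_\infty=\frac16$ are substituted, the statement reduces to the elementary algebraic limit above. The only points requiring care are the bookkeeping of the exponent—confirming that $\frac{r}{\gk}=\frac{2}{2}=1$, so that one normalizes by $n$ itself rather than by another power of $n$—and verifying that the resulting limit is both finite and strictly positive, both of which hold since the coefficient equals $\frac12$.
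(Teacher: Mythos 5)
Your proposal is correct and follows exactly the paper's argument: substitute $V_n=\frac{4n^2+12n+13}{24n^2}$ from Theorem~\ref{theo1} and $V_\infty=\frac16$, then evaluate $\lim_{n\to\infty}n(V_n-V_\infty)=\frac12$. You merely spell out the intermediate algebra ($V_n-V_\infty=\frac{12n+13}{24n^2}$) that the paper leaves implicit.
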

	\begin{proof}
		We have 
		\[R_n=\frac{4 n^2+12 n+13}{24 n^2} \te{ and } R_\infty=\mathop{\lim}\limits_{n\to \infty} R_n=\frac 1 6,\]
		and hence, using \eqref{eq00100}, we have the $D(\nu)$-dimensional constrained quantization coefficient as 
		\[\lim_{n\to \infty} n(R_n-R_\infty)=\lim_{n\to \infty} n \left(\frac{4 n^2+12 n+13}{24 n^2}-\frac{1}{6}\right)=\frac 12.\]
		Thus, the proof of the theorem is complete. 
	\end{proof} 
	
	\begin{remark} \label{rem1} 
		It is interesting to observe that although the constrained quantization dimension $D(\nu)$ of the uniform distribution with respect to the family of constraints $\set{C_j : j\in \D N}$ exists, it does not coincide with the Euclidean dimension of the space in which the support of the probability measure is defined. On the other hand, it is well-known that for an absolutely continuous probability measure, the unconstrained quantization dimension always exists and equals the Euclidean dimensions of the underlying space in which the support of the measure lies (see \cite{BW}).
	\end{remark}

	\section{Constrained quantization with respect to the family of constraints $C_j:= \set{(x,y) : x^2+y^2= \frac{1}{j^2}}~ \text{\normalfont for } j\in \D N.$} \label{section4}
	
	In this section, we determine the constrained optimal sets of $n$-points and the corresponding $n$th constrained quantization errors for the uniform distribution $\nu$ with support the closed interval $[0, 1]$ for all $n \in \mathbb{N}$ with respect to the family of constraints 
	\[
	C_j:= \bigl\{ (x,y) : x^2+y^2= \tfrac{1}{j^2} \bigr\}, \quad j \in \mathbb{N}.
	\]
	Notice that for any $j\in \D N$, the circles $C_j$ intersect the closed interval $[0, 1]$ at the points $(\frac 1 j, 0)$. 
	\begin{prop} \label{prop1}
		The constrained optimal set of one-point is given by $\ga_1=\set{(\frac 12, 0)}$ with constrained quantization error $R_1=\frac 1{12}$. 
	\end{prop}
	\begin{proof}
		Let $\ga_1$ be an optimal set of one-point, and it contains the element from the circle $C_t$ for some $t\in \D N$. Any element on the circle $C_t$ can be written as $(\frac 1 t\cos \gq, \frac 1 t\sin \gq)$, where $0\leq \gq< 2\pi$. The distortion error due to the set $\set{(\frac 1 t\cos \gq, \frac 1 t\sin \gq)}$ is given by 
		\[R(\nu;\set{(\frac 1 t\cos \gq, \frac 1 t\sin \gq)})=\int\rho(x, (\frac 1 t\cos \gq, \frac 1 t\sin \gq))\,dx=\frac{1}{t^2}-\frac{\cos \theta }{t}+\frac{1}{3}, \]
		which is minimum if $t=2$ and $\gq=0$, and the minimum value is $\frac 1{12}$. Hence, an optimal set of one-point is given by $\ga_1=\set{(\frac 12, 0)}$ with constrained quantization error $R_1=\frac 1{12}$. Thus, the proof of the proposition is complete. 
	\end{proof} 
	
	\begin{lemma} \label{lemma1}
		Let $\ga_n$ be a constrained optimal set of $n$-points for $\nu$ for any $n \in \D N$. Then, $\ga_n \ci \te{supp}(\nu)$, i.e., for any $a\in \ga_n$ we have $a\in [0, 1]$.  
	\end{lemma} 
	\begin{proof}
		Let $\ga_n$ be a constrained optimal set of $n$-points for $\nu$ for any $n \in \D N$. If $n=1$, then by Proposition~\ref{prop1}, we have $\ga_n=\set{(\frac 12, 0)}\ci \te{supp}(\nu)$, i.e., the lemma is true for $n=1$. Let us now prove the lemma for any $n\geq 2$. Let $a\in \ga_n$. We need to show that $a\in [0, 1]$. Since $a\in \ga_n$, we have $a\in C_t$ for some $t\in \D N$, and hence, we can write 
		$a:=(\frac 1t \cos \gq, \frac 1t \sin \gq)$, where $0\leq \gq<2\pi$. Let the Voronoi region of $a$ cuts the support of $\nu$ at the elements $(c, 0)$ and $(d, 0)$, where $0\leq c<d\leq 1$. Now, the distortion error contributed by the element $(\frac 1t \cos \gq, \frac 1t \sin \gq)$ on its own Voronoi region is given by 
		\[\int_{c}^d \rho(x, (\frac 1t \cos \gq, \frac 1t \sin \gq)) \, dx=\frac{-c^3 t^2+3 t \left(c^2-d^2\right) \cos \theta -3 c+d^3 t^2+3 d}{3 t^2},\]
		which is minimum if $\cos\gq=1$, i.e., if $\gq=0$, which yields the fact that 
		$a=(\frac 1t, 0)$, and hence $a\in[0, 1]$ for $n\geq 2$. Thus, with the help of Proposition~\ref{prop1}, the proof of the lemma is complete. 
	\end{proof}

	\begin{prop} \label{prop2}
		The constrained optimal set of two-points is given by $\ga_2=\set{(\frac 14, 0), (1, 0)}$ with constrained quantization error $R_2=\frac{31}{768}$. 
	\end{prop}
	\begin{proof}
		Let $\ga_2$ be a constrained optimal set of two-points with respect to the family of constraints $C_j$ for $j\in \D N$. Then, there exists $a_2, a_1\in \D N$ with $a_2>a_1$ such that 
		$\ga_2=\set{\frac 1 {a_2}, \frac 1 {a_1}}$. The distortion error is given by 
		\begin{align*} 
			R(\nu;\ga_2)&=\int\min_{a \in \ga_2}\rho(x, a) \, d\nu(x)=\int_0^{\frac 12(\frac 1 {a_1}+\frac 1  {a_2})}\rho(x, (\frac 1 {a_2}, 0))\ d\nu +\int_{\frac 12(\frac 1 {a_1}+\frac 1 {a_2})}^1\rho(x, (\frac 1 {a_1}, 0))\ d\nu \\
			&=\frac{1}{a_1^2}-\frac{1}{4 a_1^3}-\frac{1}{4 a_1^2 a_2}+\frac{1}{4 a_1 a_2^2}+\frac{1}{4 a_2^3}-\frac{1}{a_1}+\frac{1}{3},
		\end{align*}
		which is minimum if ${a_2}=4$ and $a_1=1$, and the minimum values is $\frac{31}{768}$. Hence, the constrained optimal set of two-points is given by $\ga_2=\set{(\frac 14, 0), (1, 0)}$ with constrained quantization error $R_2=\frac{31}{768}\approx 0.0403646$. 
	\end{proof} 
	\begin{prop} \label{prop3}
		The constrained optimal set of three-points is given by $\ga_3=\set{(\frac 16, 0), (\frac 12, 0), (1, 0)}$ with constrained quantization error $R_3=\frac{13}{864}$. 
	\end{prop}
	\begin{proof}
		Let $\ga_3$ be a constrained optimal set of three-points with respect to the family of constraints $C_j$ for $j\in \D N$. Then, there exists $a_3, a_2, a_1\in \D N$ with $a_3>a_2>a_1$ such that 
		$\ga_3=\set{\frac 1 {a_3}, \frac 1 {a_2}, \frac  1{a_1}}$. The distortion error is given by 
		\begin{align*} 
			R(\nu;\ga_3)&=\int\min_{a \in \ga_3}\rho(x, a) \, d\nu(x)\\
			&=\int_0^{\frac 12(\frac 1 {a_3}+\frac 1 {a_2})}\rho(x, (\frac 1 {a_3}, 0))\ d\nu +\int_{\frac 12(\frac 1 {a_3}+\frac 1 {a_2})}^{\frac 12(\frac 1 {a_2}+\frac 1 {a_1})}\rho(x, (\frac 1 {a_2}, 0))\ d\nu  +\int_{\frac 12(\frac 1 {a_2}+\frac 1 {a_1})}^1\rho(x, (\frac 1 {a_1}, 0))\ d\nu \\
			&=\frac{1}{a_1^2}-\frac{1}{4 a_1^3}-\frac{1}{4 a_1^2 a_2}+\frac{1}{4 a_1 a_2^2}-\frac{1}{4 a_2^2 a_3}+\frac{1}{4 a_2 a_3^2}-\frac{1}{4 a_3^2 a_4}+\frac{1}{4 a_3 a_4^2}+\frac{1}{4 a_4^3}-\frac{1}{a_1}+\frac{1}{3},
		\end{align*}
		which is minimum if $a_3=6$, $a_2=2$ and $a_1=1$, and the minimum values is $\frac{13}{864}$. Hence, the constrained optimal set of three-points is given by $\ga_3=\set{(\frac 16, 0), (\frac 12, 0), (1, 0)}$ with constrained quantization error $R_2=\frac{13}{864}\approx 0.0150463$. 
	\end{proof} 
	
	Proceeding in the similar way as Proposition~\ref{prop3}, the following proposition can be proved. 
	\begin{prop} \label{prop4}
		If $\ga_4$ is the constrained optimal set of four-points and $R_4$ is the corresponding constrained quantization error, then $\ga_4=\set{(\frac 1{10}, 0), (\frac 13, 0), (\frac 12, 0), (1, 0)}$ and $R_4=\frac{439}{36000}$. 
	\end{prop}
	
	The following theorem is a special case of Theorem~2.1.1 that appears in \cite{RR}. 
	\begin{theorem} (see \cite[Theorem~2.1.1]{RR}) \label{theorem0}   Let $Q$ be a uniform distribution on the closed interval $[0, 1]$. Then, the unconstrained optimal set $n$-means is given by $ \gb_n:=\set{\frac {2i-1}{2n}: 1\leq i\leq n}$, and the corresponding $n$th unconstrained quantization error is
		$R_n(Q)=\frac{(a-b)^2}{12 n^2}.$
	\end{theorem}

	\begin{remark} \label{remMe} 
		Let $k\in \D N$. Then, a set $A_k$ is a constrained set containing $k$ elements, by that it is meant that 
		\[A_k \ci \UU_{j\in \D N} C_j  \te{ with } \te{card}(A_k)=k.\]
		Let $\gb_n$ be an unconstrained optimal set of $n$-means, where $n\in\D N$. A constrained set $A_k$ corresponds to $\gb_n$, by that it is meant that $A_k$ is the collection of all elements in $\uu_{j\in \D N} C_j$, which are nearest to the elements in the set $\gb_n$. Let $\ga_k$ denote a constrained optimal set of $k$-points for $k\in \D N$. By Propositions~\ref{prop1} to \ref{prop4}, we see that $\ga_1$ corresponds to $\gb_1$, $\ga_2$ corresponds to $\gb_2$, $\ga_3$ corresponds to $\gb_3$, and  $\ga_4$ corresponds to $\gb_4$. Notice that for a given $k\in \D N$, there can be multiple $A_k$ corresponding to $\gb_n$ for different values of $n\in \D N$. For example, the unconstrained optimal sets $\gb_7$ and $\gb_8$ of seven- and eight-means are given by 
		\[\gb_7=\set{\frac{1}{14},\frac{3}{14},\frac{5}{14},\frac{1}{2},\frac{9}{14},\frac{11}{14},\frac{13}{14}} \te{ and } \gb_8=\set{\frac{1}{16},\frac{3}{16},\frac{5}{16},\frac{7}{16},\frac{9}{16},\frac{11}{16},\frac{13}{16},\frac{15}{16}}. \] 
		The corresponding constrained sets, respectively, are obtained as 
		\[\set{1,\frac{1}{2},\frac{1}{3},\frac{1}{5},\frac{1}{14}} \te{ and } \set {1,\frac{1}{2},\frac{1}{3},\frac{1}{5},\frac{1}{16}},\]
		both of which contain the same number of elements from the family of constraints 
		with distortion errors $\frac{9301}{823200}$ and $\frac{13883}{1228800}$, respectively. Thus, to calculate a constrained optimal set $\ga_k$ of $k$-points, we will first calculate all constrained sets $A_k$, then the constrained set $A_k$ for which the distortion error is smallest will be the constrained optimal set $\ga_k$ of $k$-points. 
		Below we give a methodology how to calculate the constrained optimal sets $\ga_k$ of $k$-points and the corresponding quantization errors for any positive integer $k\in \D N$.  \qed
	\end{remark} 
	
	\begin{method} \label{method1}
		The computational technique given here is developed based on the knowledge obtained from Remark~\ref{remMe}. 
		Let $\gb_n$ represent an unconstrained optimal set of $n$-means, i.e., by Theorem~\ref{theorem0}, we have 
		\[\gb_n=
		\left\{ \,\frac{2j - 1}{2n} \;\middle|\; j = 1,2,\cdots,n \,\right\}.
		\]
		Suppose we need to calculate a constrained optimal set $\ga_m$ of $m$-points. Let $\C P(\D N):=\set{A : A \ci \D N}$ be the power set of $\D N$. Let $G(m): \D N \to \C P(\D N)$ be a set-valued function which gives the collections of all positive integers $n$, such that a constrained set corresponding to $\gb_n$ contains exactly $m$ elements.  Then, 
		\begin{equation} \label{Pav1}
			\begin{aligned}
				G(m) :=
				\begin{cases}
					\left\{\, k \in \mathbb{Z} : 
					\left\lceil \tfrac{m^{2}}{4} \right\rceil \leq k \leq 
					\left\lfloor \tfrac{(m+1)^{2}}{4} \right\rfloor \,\right\}, & \text{if $m$ is even}, \\[1em]
					\left\{\, k \in \mathbb{Z} : 
					\left\lceil \tfrac{m^{2}}{4} \right\rceil \leq k \leq 
					\left\lfloor \tfrac{(m+1)^{2}}{4} - 1 \right\rfloor \,\right\}, & \text{if $m$ is odd}.
				\end{cases}
			\end{aligned}
		\end{equation} 
		Let $\te{card}(G(m))=\ell$, where $\ell\in \D N$. Then, we can write 
		\[G(m)=\set{n_j : 1\leq j\leq \ell}.\]
		Let $A_{m_j}$ be a constrained set containing $m$ elements corresponding to the unconstrained optimal set $\gb_{n_j}$ of $n_j$-means for $1\leq j\leq \ell$. Let $R(\nu;A_{m_j})$ represent the distortion errors for the probability distribution $\nu$ with respect to the sets $A_{m_j}$ for $1\leq j\leq \ell$. 
		Let \[R_m(\nu)=\min\set{R(\nu;A_{m_j}) : 1\leq j\leq \ell}.\]
		Each of the sets $A_{m_j}$ for which $R_m(\nu)=R(\nu;A_{m_j})$, gives a constrained optimal set $\ga_m$ of $m$-points with constrained quantization error $R_m(\nu)$. 
		Notice that 
		\begin{equation} \label{Pav2}
			A_{m_j}=\left\{ \,\frac{1}{j} : \; 
			j \in \bigcup_{j \in\gb_{n_j}} \Bigl\{ \operatorname{Round}\!\left(\tfrac{1}{j}\right) \Bigr\} \,\right\},
		\end{equation} 
		where the function $\te{Round}(x)$ rounds the number $x$ to the nearest integer. 
		As $A_{m_j}$ contains $m$ elements, we can write
		\[A_{m_j}:=\set{\frac 1{a^{(j)}_m}, \cdots, \frac 1{a^{(j)}_3}, \frac 1{ a^{(j)}_2}, \frac 1{a^{(j)}_1}}\]
		where $\frac 1{a^{(j)}_m}< \cdots<\frac 1{a^{(j)}_3}<\frac 1{ a^{(j)}_2}<\frac 1{a^{(j)}_1}$ and $1\leq j\leq \ell$, 
		and then we can write
		
		\begin{equation}  \label{Pav3}
			\begin{aligned}
				R(\nu;A_{m_j})&=\int_{\frac 12(\frac 1{a^{(j)}_1}+\frac 1{a^{(j)}_2 })}^{1} \rho(x, (\frac 1{a^{(j)}_1}, 0))^2\, d\nu +
				\sum_{k=2}^{m-1} \int_{\frac 12(\frac 1{a^{(j)}_{k+1} }+\frac 1{a^{(j)}_k})}^{\frac 12(\frac 1{a^{(j)}_k}+\frac 1{a^{(j)}_{k-1}})} \rho(x, (\frac 1{a^{(j)}_k}, 0))^2\, d\nu\\
				&\qquad \qquad  +\int_0^{\frac 12(\frac 1{a^{(j)}_m}+\frac 1{a^{(j)}_{m-1} })}  \rho(x, (\frac 1{a^{(j)}_m}, 0))^2\, d\nu.
			\end{aligned}
		\end{equation} \qed
	\end{method} 
	The above methodology is illustrated by the following examples. 
	\begin{example} \label{exam1}
		To obtain a constrained optimal set of nine-points, we first calculate $G(m)$, where $m=9$. By \eqref{Pav1}, we have 
		\[G(m)=\{21, 22, 23, 24\}.\]
		Since $\te{card}(G(m))=4$, we have 
		\[n_1=21, \, n_2=22, \,n_3=23, \, n_4=24.\]
		Using \eqref{Pav2} and \eqref{Pav3}, we have 
		\begin{align*}
			A_{m_1}&= \left\{\frac{1}{42},\frac{1}{14},\frac{1}{8},\frac{1}{6},\frac{1}{5},\frac{1}{4},\frac{1}{3},\frac{1}{2},1\right\} \te{ with } R(\nu;A_{m_1})=\frac{322921}{29635200}\approx 0.0108965,\\
			A_{m_2}&= \left\{\frac{1}{44},\frac{1}{15},\frac{1}{9},\frac{1}{6},\frac{1}{5},\frac{1}{4},\frac{1}{3},\frac{1}{2},1\right\} \te{ with } R(\nu;A_{m_2})=\frac{7518677}{689990400}\approx 0.0108968,\\
			A_{m_3}&= \left\{\frac{1}{46},\frac{1}{15},\frac{1}{9},\frac{1}{7},\frac{1}{5},\frac{1}{4},\frac{1}{3},\frac{1}{2},1\right\} \te{ with } R(\nu;A_{m_3})=\frac{315753139}{28974493800}\approx 0.0108976,\\
			A_{m_4}&= \left\{\frac{1}{48},\frac{1}{16},\frac{1}{10},\frac{1}{7},\frac{1}{5},\frac{1}{4},\frac{1}{3},\frac{1}{2},1\right\} \te{ with } R(\nu;A_{m_4})=\frac{5904907}{541900800}\approx 0.0108967.
		\end{align*} 
		Thus, we see that $R_m(\nu)=\frac{322921}{29635200}$. Hence, the constrained optimal set of nine-points is given by  $\ga_9=\left\{\frac{1}{42},\frac{1}{14},\frac{1}{8},\frac{1}{6},\frac{1}{5},\frac{1}{4},\frac{1}{3},\frac{1}{2},1\right\}$  with constrained quantization error $R_9(\nu)=\frac{322921}{29635200}$. 
	\end{example}
	\begin{example} \label{exam2}
		To obtain a constrained optimal set of twenty-points, we first calculate $G(m)$, where $m=20$. By \eqref{Pav1}, we have 
		\[G(20)=\{100, 101, 102, 103, 104, 105, 106, 107, 108, 109, 110\}.\]
		Since $\te{card}(G(m))=11$, we have 
		\[n_1=100, \, n_2=101, \,n_3=102, \, \cdots, \, n_{11}=110.\]
		Using \eqref{Pav2} and \eqref{Pav3}, we calculate all $A_{m_j}$ and $R(\nu;A_{m_j})$.  
		Thus, we see that 
		\[R_m(\nu)=\min\set{R(\nu;A_{m_j}) : 1\leq j\leq 11}=\frac{10205531205421}{939137753917440}=R(\nu;A_{m_6}).\] Hence, the constrained optimal set of twenty-points is given by  
		\[\ga_{20}=\left\{\frac{1}{210},\frac{1}{70},\frac{1}{42},\frac{1}{30},\frac{1}{23},\frac{1}{19},\frac{1}{16},\frac{1}{14},\frac{1}{12},\frac{1}{11},\frac{1}{10},\frac{1}{9},\frac{1}{8},\frac{1}{7},\frac{1}{6},\frac{1}{5},\frac{1}{4},\frac{1}{3},\frac{1}{2},1\right\}\] 
		with constrained quantization error $R_{20}(\nu)=\frac{10205531205421}{939137753917440}$. 
	\end{example}
	\begin{remark}
		Proceeding in a similar way, one can calculate the constrained optimal sets of $n$-points for all $n\geq 3$. 
	\end{remark} 
	
	\section{Conclusion and Future Work}\label{conclusionfuturework}
	In this paper, we investigated constrained quantization for a uniform probability distribution with respect to two distinct families of constraints: a family of straight lines and a family of concentric circles. For the family of straight lines, we explicitly determined the constrained optimal sets of $n$-points and the corresponding $n$th constrained quantization errors for all $n \in \mathbb{N}$. Furthermore, we established the existence of the constrained quantization dimension and showed that it equals two, together with the determination of the associated constrained quantization coefficient. For the family of concentric circles, we developed a systematic methodology to compute constrained optimal sets of $n$-points and the corresponding constrained quantization errors. Several explicit examples were provided to demonstrate the application of this methodology.  
	
	The results highlight the fact that constrained quantization can exhibit significant structural differences compared to the unconstrained case. In particular, constrained optimal sets may have fewer elements than the prescribed number $n$, and the constrained quantization dimension need not coincide with the Euclidean dimension of the underlying space. These findings emphasize the intricate nature of constrained quantization and its potential for modeling problems where symmetry, geometry, or external restrictions must be taken into account.  
	
	Several avenues for future research naturally emerge from this study. One direction is to extend the analysis to other families of constraints, such as polygons, ellipses, or more general algebraic curves. Another promising line of inquiry is to investigate constrained quantization for higher-dimensional probability distributions, where geometric restrictions often arise in applications. Additionally, the interplay between constrained quantization and dynamical systems, particularly in the context of invariant measures, could provide a deeper theoretical understanding. From a computational perspective, further refinement of algorithms for determining constrained optimal sets in complex settings would also be of considerable interest.  
	
	Overall, the framework developed here not only advances the theory of constrained quantization for uniform distributions but also lays the groundwork for broader exploration in probability, geometry, and applied fields such as information theory, signal processing, and mathematical modeling of physical systems.  
	\qed 
	
	\section*{Declaration}
	
	\noindent
	\textbf{Conflicts of interest.} We do not have any conflict of interest.\\
	\\
	\noindent
	\textbf{Data availability:} No data were used to support this study.\\
	\\
	\noindent
	\textbf{Code availability:} Not applicable\\
	\\
	\noindent
	\textbf{Authors' contributions:} Each author contributed equally to this manuscript.

	\bibliographystyle{amsplain}
	\bibliography{References}

\end{document}